\title[upper graph box dimension of bounded subsets]{On a conjecture regarding the upper graph box dimension of bounded subsets of the real line}
\newtheorem{theorem}{Theorem}
\newtheorem{corol}{Corollary}
\newtheorem{lemma}{Lemma}
\newtheorem{remark}[theorem]{Remark}
\newcommand{\IN}{\mathbb{N}}
\newcommand{\IR}{\mathbb{R}}
\newcommand{\gdims}[1]{\overline{dim}_{gr,B}(#1)}
\newcommand{\dims}[1]{\overline{dim}_{B}(#1)}
\newcommand{\atopo}{this is impossible}
\begin{document}

\begin{abstract}
Let $X\subset \IR $ be a bounded set; we introduce a formula that calculates the upper graph box dimension of $X$ (i.e. the supremum of the upper box dimension of the graph over all uniformly continuous functions defined on $X$). We demonstrate the strength of the formula by calculating the upper graph box dimension for some sets and by giving an "one line" proof, alternative to the one given in \cite{initial}, of the fact that if $X$ has finitely many isolated points then its upper graph box dimension is equal to the upper box dimension plus one. Furthermore we construct a collection of sets $X$ with infinitely many isolated points, having upper box dimension $a$ taking values from zero to one while their graph box dimension takes any value in $[\max\{2a,1\},a+1],$ answering this way, negatively to a conjecture posed in \cite{initial}.
\end{abstract}
\maketitle
\section{Introduction}

Let $X$ be a set, let also $C_{u}(X)$ be the set of all uniformly continuous functions on $X$ equipped with the uniform norm $\|\cdot\|_{\infty}$ . In \cite{initial}, the concept of the upper graph box dimension was introduced, i.e.
$$\gdims{X}=\sup_{f\in C_{u}(X)}\dims{graph(f)} $$
and it was proved, that a \textbf{typical} element (in the sense of Baire) in the set $C_{u}(X)$, has a graph with upper box dimension equal to the upper graph box dimension of the set. To put it in another way, it was proved that a typical element in $C_{u}(X)$ has a graph with upper box dimension as high as allowed by the set. The proof given, made no use of any properties of the set $X$. It was in the lines of "if the set $X$ can accommodate a function having graph with upper box dimension bigger or equal than $b$, then a typical function will do as well". In \cite{initial}, it was proved that 
\begin{equation}\label{1} 1\leq\gdims{X}\leq\dims{X}+1 \end{equation} and for the case where $X$ has finitely many isolated points, it was proven that $$\gdims{X}=\dims{X}+1, $$ while the general case remained open. It was conjectured that the upper graph  box dimension of $X$ is either equal to the upper box dimension of $X$ plus one or just one.

In this paper, we introduce a formula that calculates the upper graph box dimension of a set $X$. By using this formula we refine inequality \ref{1}, we give a straightforward alternative proof of the fact, that if $X$ has finitely many isolated points then its upper graph box dimension is equal to the upper box dimension plus one and even more we use the formula to calculate the upper graph box dimension for a collection of natural sets. We conclude by constructing a collection of sets having all possible values allowed by the refined inequality, and disproving this way the conjecture in \cite{initial}.
\begin{remark}
In \cite{Williams} and \cite{Petruska}, it was respectively proved that when $X=[0,1],$ a typical function in $C_{u}(X)$ has a graph with Hausdorff dimension equal to 1 and packing dimension equal to 2.
Although we are not aware of any extensions of these results in general sets $X$, we strongly believe that for a typical  element in $C_{u}(X)$ we always have $\dim_{H}(graph(f))=\dim_{H}(X)$ and $\dim_{p}(graph(f))=\dim_{p}(X)+1$, and therefore a concept like the upper graph dimension is useful only for the box dimension.
\end{remark}

For simplicity we will assume that $X\subset[0,1]$. We start by recalling the definition of the upper box dimension of subsets of $\IR^{d}$. For $\delta>0$, let \begin{equation}\mathcal{Q}_{\delta}^{d}=\left\{\prod_{i=1}^{d}[n_{i}\delta,(n_{i}+1)\delta]\Bigg|n_{1},...,n_{d}\in Z\right\} \end{equation}
denote the standard $\delta-$grid in $\IR^{d}$, and for a subset $X$
of $\IR^{d}$ we write
\begin{equation}
N_{\delta}(X)=\Bigg|\left\{ Q\in\mathcal{Q}_{\delta}^{d} \big| Q\cap X\neq 0\right \}\Bigg|
\end{equation}
for the number of cubes in $\mathcal{Q}_{\delta}^{d} $ that intersects $X$. The upper box dimension of $X$ is now defined by
\begin{equation}
\dims{X}=\limsup_{\delta\rightarrow 0}\frac{\log{N_{\delta}(X)}}{-\log\delta}.
\end{equation}
The reader is referred to Falconer's \cite[p. 42]{Falconer} for a thorough  discussion on the properties of the box dimension. One property that we are going to use here, regards the alternative type of boxes that can be used in the definition. More specifically, we will be working with $ \delta $-meshes of \textbf{disjoint} cubes of the form $ [m_1\delta,(m_1+1)\delta) \times [m_2\delta,(m_2+1)\delta) $. 

Also note that (\cite{Falconer}) it is enough to consider limits as $ \delta $ tends to 0 through any decreasing sequence $ \delta_k $ - as long as $ \delta_{k+1} \ge c\delta_k $. Taking $ \delta_k = \frac{1}{k} $ we can work with limits of $ \frac{\log N_{\frac{1}{k}}(F)}{\log k} $ as $ k \in \IN $ tends to infinity.

For $f\in C_{u}(X)$, we will write graph$(f)$ to denote the graph of $f$, ie.
$$\text{graph}(f)=\{(x,f(x))|x\in X\}.$$
With a slight abuse of notation, we are going to write $N_{\delta}(f)$ instead of $N_{\delta}(\text{graph}(f))$.

With $P(X)$ we are going to define all the polygonic functions restricted in $X$.
 
Finally we define the sequence
$$g_{m}(X)=\sum_{k=1}^m \min\{m,\#(X \cap [\frac{k-1}{m},\frac{k}{m}])\}.$$

\section{equivalence of definitions and applications of the formula}
In the first part, we are going to prove that we can use $g_{m}$ to calculate the upper graph box dimension of a set. More specifically we have 

\begin{theorem}
\label{theorem001}
\[
\gdims{X}=\limsup_{m\to \infty}\frac{\log(g_{m})}{\log(m)}.
\]
\end{theorem}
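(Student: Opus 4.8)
The plan is to establish the equality by proving two inequalities, in each case relating the covering number $N_{1/m}(\mathrm{graph}(f))$ of a graph to the combinatorial quantity $g_m(X)$. The key observation is that $g_m(X)$ counts, column by column over the $m$ vertical strips $[\frac{k-1}{m},\frac{k}{m}]$, the quantity $\min\{m, \#(X\cap[\frac{k-1}{m},\frac{k}{m}])\}$, and this is exactly the maximum number of boxes of side $1/m$ that the graph of a function can occupy inside that strip: a function restricted to $\ell$ points of $X$ in a strip can meet at most $\min\{m,\ell\}$ of the $m$ boxes stacked in that strip (at most $\ell$ because each point contributes one value, at most $m$ because there are only $m$ boxes available).

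First I would prove the upper bound $\dims{\mathrm{graph}(f)} \le \limsup_m \frac{\log g_m(X)}{\log m}$ for every $f\in C_u(X)$. Using the disjoint half-open $\delta$-mesh permitted by the remarks after the definition, with $\delta = 1/m$, each point $(x,f(x))$ lies in exactly one box, and the boxes meeting $\mathrm{graph}(f)$ in the $k$-th vertical strip number at most $\min\{m,\#(X\cap[\frac{k-1}{m},\frac{k}{m}])\}$ by the counting argument above; summing over $k$ gives $N_{1/m}(f)\le g_m(X)$, hence the bound on the $\limsup$ after dividing by $\log m$ and taking suprema over $f$. The mild overlap between closed strips at the endpoints $\frac{k}{m}$ only helps the inequality or can be absorbed with a harmless constant factor, which does not affect the box dimension.

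The harder direction is the lower bound: for each $m$ I must exhibit a single uniformly continuous function whose graph meets close to $g_m(X)$ boxes at scale $1/m$, and more precisely a single $f\in C_u(X)$ whose graph realizes the full $\limsup$. The natural approach is to build, for a suitable sparse subsequence $m_j$ along which $\frac{\log g_{m_j}(X)}{\log m_j}$ converges to the supremum, a piecewise-linear (polygonic, using $P(X)$) function that in each strip oscillates so as to visit $\min\{m_j,\#(X\cap\text{strip})\}$ distinct boxes — spreading the available points of $X$ in that strip across that many vertical levels — while keeping the global oscillation controlled enough to remain uniformly continuous; one interpolates between the prescribed values at chosen points of $X$. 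The main obstacle is exactly this construction: one must reconcile the \emph{local} demand (large oscillation inside each strip to hit many boxes) with \emph{global} uniform continuity and with the fact that the optimal scales $m_j$ differ, so the function must simultaneously be near-optimal at all scales in the subsequence; the standard device is a nested/telescoping construction where the perturbation introduced at scale $m_{j+1}$ has amplitude small compared to $1/m_j$, so earlier scales are not disturbed, together with a diagonalization ensuring uniform continuity of the limit. Once such an $f$ is produced, $N_{1/m_j}(f) \ge c\, g_{m_j}(X)$ gives $\dims{\mathrm{graph}(f)} \ge \limsup_m \frac{\log g_m(X)}{\log m}$, and combining with the first direction and the definition of $\gdims{X}$ as a supremum finishes the proof.
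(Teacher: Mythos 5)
Your plan is essentially the paper's proof: the upper bound is the same trivial observation $N_{1/m}(f)\le g_m(X)$, and the lower bound is realized by the same telescoping sum of small polygonal perturbations, each near-optimal at its own scale $1/m_j$ and of amplitude small enough not to disturb the earlier scales (the paper's Lemma \ref{lemma001}). The one ingredient your sketch leaves implicit is the converse interference — that adding the new oscillating piece to the \emph{already accumulated} function must not collapse its boxes at the new scale — which the paper handles by keeping each $F_j$ polygonal with bounded slope, so it meets every column in boundedly many boxes and the sum loses only a constant factor (Lemmas \ref{boxes_of_f+g} and \ref{polyf}).
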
 
 Afterwards we are giving an alternative proof to the fact, that if $X$ has finitely many isolated points, then $$\gdims{X}=\dims{X}+1 .$$ 

We conclude the section, by providing some natural examples of sets where $g_{m}$ can be used to calculate their upper graph box dimension.

\begin{lemma}
\label{lemma002}
Let  $f \in P(X),  \delta_{0} > 0, p > 0$ and  $\epsilon > 0.$ It exists $ \delta_{1} : \delta_{0} > \delta_{1} > 0 $ and $ g \in P(X)$ with $g > 0 $, such that $ ||g||_\infty < p $ and
\[
 \frac{\log N_{\delta_{1}}(g+f)}{-\log \delta_{1}} \geq \limsup_{m \to \infty} {\frac{\log g_m}{\log m}}  - \epsilon
\] 
\end{lemma}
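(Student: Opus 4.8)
The plan is to produce, for one well-chosen small scale $\delta_1$, a polygonal perturbation $g$ of $f$ whose graph $g+f$ meets at least $g_m(X)$ boxes of the $\delta_1$-mesh, where $m$ runs along a subsequence realizing $s:=\limsup_{m\to\infty}\frac{\log g_m}{\log m}$. Write $f=h|_X$ with $h$ polygonal on $[0,1]$ and set $L=\mathrm{Lip}(h)$. The key point to get right is the scale: the constraint $\|g\|_\infty<p$ confines the graph of $g+f$ to a horizontal strip of height $<p$, so at scale $\tfrac1m$ one cannot in general fit the up-to-$m$ rows per column that $g_m$ asks for. The fix is to count at the finer scale $\delta_1=\tfrac1M$ with $M=qm$, where $q=q(p)\in\IN$ is a constant (e.g.\ $q=\lceil 8/p\rceil$) chosen so that a strip of height $<p$ comfortably contains $m$ rows of height $\tfrac1M$ together with the oscillation of $h$ on an interval of length $\tfrac1M$.

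If $s=0$ the statement is trivial (take $g\equiv p/4$ and any $\delta_1<\delta_0$, since $N_{\delta_1}\ge1$), so assume $s>0$ and fix $m$ large enough that $\tfrac1{qm}<\delta_0$, $m>L+1$, and $\frac{\log g_m}{\log q+\log m}>s-\epsilon$; this is possible along the subsequence realizing $s$, because $\frac{\log(qm)}{\log m}\to1$. Put $M=qm$, $\delta_1=\tfrac1M$. For each column $I_j=[\tfrac{j-1}{M},\tfrac jM]$ let $\nu_j=\#(X\cap I_j)$, $r_j=\min\{m,\nu_j\}$, and $c_j=\max_{I_j}h$; since $r_j\le\nu_j$ we may choose points $x^j_1<\dots<x^j_{r_j}$ in $X\cap I_j$, and we choose values $y^j_1,\dots,y^j_{r_j}$ lying in $r_j$ distinct $\delta_1$-rows contained in the band $[c_j+\tfrac p4,\ c_j+\tfrac p4+\tfrac{r_j+1}{M}]$ (this band has height $\tfrac{r_j+1}{M}$, hence contains at least $r_j$ full rows). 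Let $g$ be the restriction to $X$ of the polygonal function on $[0,1]$ whose nodes are $0$, $1$ and all the $x^j_i$, with value $\tfrac p4$ at $0$ and $1$ and value $y^j_i-h(x^j_i)$ at $x^j_i$, linearly interpolated between consecutive nodes. The choices of $q$ and $m>L+1$ force every node value into $[\tfrac p4,\tfrac p2]$, so $0<g<p$ and $g\in P(X)$.

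By construction $(g+f)(x^j_i)=g(x^j_i)+h(x^j_i)=y^j_i$, and these $r_j$ graph points lie in $r_j$ distinct mesh boxes of the column $I_j$; since the $M$ columns are disjoint, $N_{\delta_1}(g+f)\ge\sum_{j=1}^M r_j=\sum_{j=1}^M\min\{m,\nu_j\}$. Now group the $M$ columns into the $m$ blocks of $q$ consecutive columns whose unions are the intervals $[\tfrac{k-1}{m},\tfrac km]$. For each such block, $\sum_{j\in\text{block }k}\nu_j\ge\#(X\cap[\tfrac{k-1}m,\tfrac km])$, and by subadditivity of $t\mapsto\min\{m,t\}$ we get $\sum_{j\in\text{block }k}\min\{m,\nu_j\}\ge\min\{m,\#(X\cap[\tfrac{k-1}m,\tfrac km])\}$; summing over $k$ yields $N_{\delta_1}(g+f)\ge g_m$. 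Hence $\frac{\log N_{\delta_1}(g+f)}{-\log\delta_1}=\frac{\log N_{\delta_1}(g+f)}{\log(qm)}\ge\frac{\log g_m}{\log q+\log m}>s-\epsilon$, which is the desired inequality.

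The only genuine difficulty is the one flagged at the start: choosing the mesh scale as $\tfrac1{qm}$ with $q\sim 1/p$ rather than $\tfrac1m$, so that the sup-norm restriction on $g$ is harmless, and then checking via the subadditivity estimate that refining the mesh in this way costs nothing in the box count (one still recovers $g_m$). Everything else — verifying $0<g<p$ from the choices of $q$ and $p/4$, and the bookkeeping of which points sit in which rows and columns — is routine.
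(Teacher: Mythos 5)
Your proof is correct, and it reaches the conclusion by a genuinely different route than the paper. The paper builds a positive perturbation $g_{\delta,p}$ at the scale $\delta=\tfrac1m$ itself, whose own graph occupies $\sum_k\min\{pm,\#(X\cap I_k)\}$ boxes (the factor $pm$ rather than $m$ being forced by the sup-norm constraint), and then invokes the appendix machinery (Lemmas \ref{boxes_of_f+g} and \ref{polyf}) to show that adding the polygonal $f$, which meets each column in at most $c$ boxes, destroys at most a fixed fraction $\tfrac1{2c}$ of these boxes; the constants $c'$, $c''$ depending on $p$ and on the slope of $f$ are then absorbed into the $\log$ ratio. You instead prescribe the values of $f+g$ directly at the selected points of $X$ (setting $g(x^j_i)=y^j_i-h(x^j_i)$ with $y^j_i$ in distinct rows above $\max_{I_j}h$), so no lemma about box counts of sums is needed and no multiplicative loss occurs; the price of the sup-norm constraint is paid by refining the mesh to $\delta_1=\tfrac1{qm}$ with $q\sim 1/p$, and the full quantity $g_m$ is then recovered from the fine-scale count via subadditivity of $t\mapsto\min\{m,t\}$ over blocks of $q$ columns, with $\tfrac{\log(qm)}{\log m}\to1$ making the change of scale harmless. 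Your approach is more self-contained (it makes the appendix lemmas unnecessary for this step) at the cost of the block/subadditivity bookkeeping; both arguments share the same skeleton of choosing $m$ along the subsequence realizing the $\limsup$. Two cosmetic points you should tidy: a point of $X$ lying exactly on a grid line $\tfrac jM$ could be selected for two adjacent columns with conflicting prescribed values, so select from half-open intervals (the paper's construction has the same unaddressed edge issue); and your node values actually land in $[\tfrac p4,\tfrac p2+\tfrac LM]\subset[\tfrac p4,\tfrac{5p}8]$ rather than $[\tfrac p4,\tfrac p2]$, which of course still gives $0<g<p$.
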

\begin{proof}
  If $ a = \displaystyle \limsup_{m \to \infty} \frac {\log g_m}{\log m} $ then there exists a subsequence $ g_{m_k} $ with $m_k \to \infty $ such that $ \displaystyle a = \lim_{m_k \to \infty} \frac {\log g_{m_k}}{\log m_k} $. That means that $ \forall \epsilon >0$ there exists a $ k_0\in N $ such that
  
  \begin{equation}
    \label{lema002sxesi1}
    \forall m_k > m_{k_0} \Rightarrow | a - \frac {\log g_{m_k}}{\log m_k} | < \frac{\epsilon}{2}.
  \end{equation}
  
  Let now arbitrary $\delta > 0 $ of the form $\frac{1}{m}$. We will construct a function $ g_{\delta,p} \in P(X) $ as follows. 
For every interval $ I_k = [\frac{(k-1)}{n},\frac{k}{m}] $ we select $ n_{k} = min \{mp, \# X \cap I_k) \} $ elements of $ I_k \cap X $, we will call $ b^{k}_i , i\in\{1,...,n_{k}\}.$  We do this $ \forall k : I_k \cap X \neq \emptyset $ and we end up with a finite subset of $ X $. For every $ b^{k}_i $ we define a point $ (b^{k}_i,g_1(b^{k}_i)) $ in a way that, no two points occupy the same box and $g_1(b^{k}_i) < p $ for all $k\in \{1,...,m\},i\in\{1,...,n_{k}\}.$ If we consider $ g_1 $ to be the polygonal line joining all $ (b^{k}_i,g_1(b^{k}_i))$ then  $ g_{\delta,p} $ is the restriction of $ g_1 $ in $ X $. From its construction $ || g_{\delta,p} || < p $ and $ N_\delta(g_{\delta,p}) \geq \sum_{k=1}^{m} \min\{pm,\#X \cap [\frac{(k-1)}{m},\frac{k}{m}]\} .$
  
   Since $f$ is polygonic it satisfies the assumptions of lemma \ref{polyf} for some constant $c,$ and therefore from lemmas \ref{boxes_of_f+g} and \ref{polyf} we have that $ N_\delta(f+g_{\delta,p}) \geq \frac{N_\delta(g_{\delta,p})}{2c}.$
 
We have:
   \begin{align*}
    N_{\delta}(f+g_{\delta,p}) \geq \frac{N_{\delta}(g_{\delta,p})}{2c} &\geq c' \sum_{k=1}^{m} \min\{pm,\#(X \cap [(k-1)/m,k/m])\}  \\
   & \geq c'' \sum_{k=1}^{m} \min\{m,\#(X \cap [(k-1)/m,k/m])\}.
  \end{align*}
  
Where $ c'' $ depends on $ p .$
Using the above we have
  \begin{align*}
    \frac{\log \left(N_{\delta}(f+g_{\delta,p})\right)}{\log \delta} & \geq 
    \frac {\log \left(c'' \sum_{k=1}^{m} \min\{m,\#X \cap [(k-1)/m,k/m]\} \right)}{\log \delta}  \\
    &=\frac{\log c''}{\log m} + \frac{\log\left(\sum_{k=1}^{m} \min\{m,\#X \cap [(k-1)/m,k/m]\}\right)}{\log m}.
  \end{align*}

  We can select a sufficiently large $ m_0 $, such that for $ m > m_0$ we get $\frac{\log c''}{log m} > - \epsilon / 2 $.
  Now if we select $ \delta_{1}=\frac{1}{m_{1}} $ with $ m_{1} \in \{ m_k \}_{k\in N}$ satisfying $$ m_{1} > \max \{m_{k_0}, m_0 ,\frac{1}{\delta_{0}}\},$$ we will have that:
  
  \begin{align*}
 \frac{\log N_{\delta_{1}}(g_{\delta_{1},p}+f)}{-\log \delta_{1}} & = \frac{\log N_{\delta_{1}}(g_{\delta_{1},p}+f)}{\log m_1} \\ & 
\geq\frac{\sum_{k=1}^{m_1} \min \{ m_1,\# X \cap [(k-1)/m_1,k/m_1] \} }{\log m_{1}} - \frac{\epsilon}{2} \\ &\stackrel{(\ref{lema002sxesi1})}{\geq}
    \limsup_{m \to \infty} \frac {\log g_m}{\log m} - \frac{\epsilon}{2} - \frac{\epsilon}{2} = \limsup_{m \to \infty} \frac {\log g_m}{\log m} - \epsilon .
  \end{align*}

\end{proof}

\begin{proof}[Proof of Theorem 1.]

  First we will show that
  \[
    \gdims{X} \geq \limsup_{m \to \infty} \frac{\log g_m}{\log m}
  \]
  For simplicity let $ a = \limsup_{m \to \infty} \frac{\log g_m}{\log m} $. Let $ f_1(x) = \frac{1}{4}x, F_{1}=f_{1}, \delta_{1}=\frac{1}{2}$ and. Let also assume that for $i\in\{1,2...,n-1\},$ we have chosen $f_{i},$ $F_{i},$ and $\delta_{i}$ to satisfy 
  \begin{enumerate}
\item $F_{i}=\sum_{j=1}^{i}f_{j} ,$
\item $\frac{\log N_{\delta_{i}}(F_{i})}{-\log{\delta_{i}}}\geq a-\frac{1}{i},$
\item $\delta_{i}<\max\{\frac{1}{i},\delta_{i-1}\},$
\item $ || f_i ||_{\infty} \leq  min \{ \frac{\delta_1}{2^i},\frac{\delta_2}{2^{i-1}}, \dots, \frac{\delta_i}{2}, \frac{1}{2^i} \}.$
\end{enumerate}

 By Lemma \ref{lemma002} for $g=F_{n-1},\delta=\delta_{n-1}, 
p = min \{ \frac{\delta_1}{2^{n-1}},\frac{\delta_2}{2^{n-2}}, \dots, \frac{\delta_{n-1}}{2}, \frac{1}{2^{n}} \}$ and $\epsilon=\frac{1}{n},$ 
we find $ f_n \in P(X), \delta_{n}>0, $ with $ || f_n ||_{\infty} \leq p$ and  $ 
\delta_{n} < min \{ \frac{1}{n},\delta_{n-1} \} $ such that $ \frac{\log 
N_{\delta_{n}}(F_n)}{-\log \delta_{n}} \geq a - \frac{1}{n}. $

Since $|| f_n ||_{\infty} \leq \frac{1}{2^{n}},$ we have that $ F_i $ converges uniformly to some $ F \in C_u(X). $   
 
Also for $n\in N,$ since $ ||f_{i}||<\frac{\delta_{n}}{2^{i-n+1}} ,\forall i>n$ we have $ \sum_{i=n+1}^{\infty}||f_{i}||<\delta_{n} $ and therefore by Lemma \ref{lemma001} $$\frac{\log N_{\delta_n}(F)}{-\log \delta_n} \geq \frac{\log \frac{1}{2} N_{\delta_n}(F_n)}{-\log \delta_n} = \frac {\log 2}{\log \delta_n} + \frac {N_{\delta_n(F_n)}}{\log \delta_n} \geq a - \frac{1}{n} +\frac {\log 2}{\log \delta_n}. $$ So we will have that $\lim_{\delta_n \to 0} \frac{\log N_{\delta_n}(F)}{-\log \delta_n} \ge a .$ Since $ F \in C_u(X) $ that means that $ \gdims{X} \geq a. $
  
  Now we will show that $ \gdims{X} \leq \lim_{m \to \infty} \frac{\log g_m}{\log m} $.
  
  Again $ a = \limsup_{m \to \infty} \frac{\log g_m}{\log m} $. It is obvious that $ \forall m, \forall f ~ N_{\frac{1}{m}}(f) \leq g_m $. That means that for every $ f ~,~ \limsup_{m \to \infty} \frac{\log N_{\frac{1}{m}}(f)}{\log m} \leq a $. Since we can simply take limits for $ \delta_m = \frac{1}{m} $ we have 
  \[
    \gdims{X} = \sup_{f \in C_u(X)} \limsup_{\delta \to 0}  \frac{\log N_{\delta}(f)}{-\log \delta} = \sup_{f \in C_u(X)} \limsup_{m \to \infty} \frac{\log N_{\frac{1}{m}}(f)}{\log m} \leq a.
  \]
  
\end{proof}

\begin{corol}\label{corol1}
  Let $X$ be a subset of [0,1] with finitely many isolated points. Then
  \[
    \gdims{f} = \dims{X} + 1.
  \]
  
\end{corol}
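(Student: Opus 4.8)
The plan is to read Corollary~\ref{corol1} off Theorem~\ref{theorem001}, which turns the statement into a combinatorial estimate for $g_m=g_m(X)=\sum_{k=1}^m\min\{m,\#(X\cap I_k)\}$, where $I_k=[\frac{k-1}{m},\frac{k}{m}]$. Let $N_m$ be the number of $k\in\{1,\dots,m\}$ with $I_k\cap X\neq\emptyset$, so that $\dims{X}=\limsup_m\frac{\log N_m}{\log m}=:a$, and assume $X$ is infinite (this is what gives the statement content, and it forces $N_m\to\infty$). One direction is immediate: each summand of $g_m$ is at most $m$, so $g_m\le mN_m$, hence $\frac{\log g_m}{\log m}\le 1+\frac{\log N_m}{\log m}$ and, on taking $\limsup$, $\gdims{X}\le a+1$ (this is also inequality~\eqref{1}). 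So the real task is $\limsup_m\frac{\log g_m}{\log m}\ge a+1$.

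For that I would call $I_k$ \emph{rich} if $X\cap I_k$ is infinite; a rich interval contributes exactly $m$ to $g_m$, so $g_m\ge mG_m$, where $G_m$ is the number of rich intervals among $I_1,\dots,I_m$. The crux is the claim that $G_m\ge c_1N_m-c_2$ for constants $c_1>0$ and $c_2$ depending only on the number $K$ of isolated points of $X$. Granting it, $g_m\ge mG_m\ge c_1mN_m-c_2m$, so $\frac{\log g_m}{\log m}\ge 1+\frac{\log(c_1N_m-c_2)}{\log m}$; since $N_m\to\infty$ the last term differs from $\frac{\log N_m}{\log m}$ by a quantity tending to $0$, and hence has $\limsup$ equal to $a$. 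Thus $\limsup_m\frac{\log g_m}{\log m}\ge a+1$, and with Theorem~\ref{theorem001} and the easy direction we get $\gdims{X}=\dims{X}+1$.

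To prove the claim I would sort the $N_m$ intervals meeting $X$. If $I_k$ is not rich then $X\cap I_k$ is finite, so it contains no interior accumulation point of $X$ (an interior non-isolated point yields a sequence of distinct points of $X$ eventually inside $I_k$); therefore either $X\cap I_k$ contains one of the $\le K$ isolated points --- and there are at most $2K$ such intervals, since each isolated point lies in at most two $I_j$'s --- or $X\cap I_k\subseteq\{\frac{k-1}{m},\frac{k}{m}\}$ with the occurring endpoint(s) non-isolated. In the latter case $X$ misses the open interior of $I_k$, so a non-isolated endpoint, say $\frac{k-1}{m}$, is accumulated by points of $X$ only from the left, which puts infinitely many points of $X$ into $I_{k-1}$ and makes $I_{k-1}$ rich; a short check on the endpoints then shows these \emph{boundary} intervals occur in runs of length at most two, each run adjacent to a rich interval, so there are at most $4G_m$ of them. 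Summing, $N_m\le G_m+2K+4G_m$, that is, the claim with $c_1=\frac15$, $c_2=\frac{2K}{5}$.

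The only real obstacle I anticipate is exactly this endpoint bookkeeping. One might hope that a grid interval meeting $X$ always contains infinitely many points of $X$ unless it contains an isolated point, but that fails because of one-sided accumulation right at a grid line; the point is to show that the exceptional intervals are sparse --- bunched in runs of length at most two, each next to a rich interval --- so that they cost only a bounded term plus a fixed fraction of $G_m$, which vanishes after forming $\frac{\log(\cdot)}{\log m}$ and letting $m\to\infty$. Everything else is the soft observation that $g_m$ is squeezed between $\frac{mN_m}{5}-O(m)$ and $mN_m$.
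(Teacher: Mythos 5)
Your proposal is correct and follows essentially the same route as the paper: invoke Theorem~\ref{theorem001} and then show that $g_m\gtrsim m\,N_{1/m}(X)$ because, apart from a bounded number of intervals containing isolated points and a controlled fraction of ``endpoint'' intervals, every grid interval meeting $X$ contains infinitely many points of $X$ and so contributes a full $m$ to $g_m$. The only difference is that you carry the isolated points along and count them, and you make rigorous the one-sided-accumulation-at-a-grid-line issue that the paper dismisses with the phrase ``the half are taken to account for edge behavior''; your run-length bookkeeping (constant $\tfrac15$ in place of the paper's $\tfrac12$) is a careful justification of exactly that step, and the value of the constant is immaterial after taking $\frac{\log(\cdot)}{\log m}$.
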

\begin{proof}
 If a set has finitely many isolated points we may remove those without affecting the box dimensions of the set. So every point in $ X $ can be considered an accumulation point. So we will have $ \min\{m,\#(X \cap [\frac{k-1}{m},\frac{k}{m}])\} = m $ for at least half the boxes that intersect with $ X $ - the half are taken to account for edge behavior. That gives
  \begin{align*}
    \gdims{K} = \limsup_{m \rightarrow \infty} \frac{\log g_m}{\log m} = \limsup_{m \rightarrow \infty} \frac{\log \frac{1}{2} m N_{1/m}(K)}{\log m} = \\
    1 + \limsup_{m \rightarrow \infty} \frac{N_{1/m}(K)}{\log m} = 1 + \dims{K}.
  \end{align*} 

\end{proof}

\begin{corol}
\label{corol01}
Let $ A = \{a_n\}~,~a_n = \dfrac{1}{n^p}.$ We have
$\dims{A}=\frac{1}{p+1}$ and $\gdims{A}=\frac{2}{p+1}$
\begin{proof}

For $ f(x) = \displaystyle{\frac{1}{x^{p}}} ~,~ x > 0 $ we have
\[ 
  f'(x) = -p x^{{(-p - 1)}} < 0 \text{ and } f''(x) = {(p + 1)} p
  x^{{(-p - 2)}} > 0
\]
From the relationship $ f(k) - f(k+1) = (k - k - 1) \cdot f'(u) = -f'(u) $, for
$ u \in (k,k+1) $ and the 
fact that $ |f'(x)| = -f'(x) $ is a decreasing function we have that $ b_n = a_n
- a_{n+1} $ is a decreasing sequence.
That means that for the smallest $ n_0 $ such that $ f'(n_0) < 1/m $ we have
\begin{equation}
\label{n0rel}
  \forall n > n_0 \Rightarrow a_{n}-a_{n+1} < \frac{1}{m} \text{ and } \forall n
<
  n_0 \Rightarrow 
  a_{n}-a_{n+1} > \frac{1}{m}.
\end{equation} \\
This tells us that for $ n \leq  n_0 $ we cannot have two distinct $ a_n $ in
the same box (that would mean their 
distance is $ < \frac{1}{m} $). Likewise for $ n > n_0 $ we cannot have a box
with no element of $ \{a_n\} $ in it 
(that would mean we have a distance that is $ > \frac{1}{m} $). \\
So to ``count'' the number of boxes that have elements of $ \{a_n\} $ inside
them all we need to do is find $ n_0 $, find
which box $ a_{n_0} $ lies in and add $ n_0 - 1 $ to the number of that box.
That means we are counting all boxes that are 
closer to $ 0 $ than the box $ a_{n_0} $ is in (including that box) and we are
counting one box for every element of our 
sequence before $ n_0 $. \\[6pt]
This gives us

\begin{align*}
  f'(x) = \frac{1}{m} \\
  -p x^{{(-p - 1)}} = \frac{1}{m} \\
  x = \sqrt[p+1]{mp} \\
  n_0 = \left [ \sqrt[p+1]{mp}~ \right ]
\end{align*}

Since $  \frac{k-1}{m} < a_{n_0} \leq \frac{k}{m} \Rightarrow k-1 < ma_{n_0}
\leq k \Rightarrow a_{n_0} $
lies in box number $  [ma_{n_0}] $. \\
To calculate the dimension of set $ A $ we must now calculate 

\begin{equation}
  \lim_{m \to \infty} \frac{\log (n_0 + [ma_{n_0}] )}
  {\log m} = \lim_{m \to \infty} \frac{\log \left( \left[ \sqrt[p+1]{mp}
\right] 
  + \left [m \cdot \frac{1}{\left[ \sqrt[p+1]{mp} \right]^p} \right
 ] \right) }{\log m}
\end{equation}

The above limit exists and is equal with 

\begin{align*}
  &\lim_{m \to \infty} \frac{\log \left(\sqrt[p+1]{mp} +  m \cdot
    \frac{1}{(\sqrt[p+1]{mp})^p} \right) }{\log m} =
     \lim_{m \to \infty} \frac{\log \left((mp)^{\frac{1}{p+1}} +  m \cdot
      \frac{1}{(mp)^{p/p+1}} \right) }{\log m} \\
 &= \lim_{m \to \infty}\frac{\log \frac{m(p+1)}{(mp)^{p/p+1}}}{\log m} =  \lim_{m
    \to \infty}\frac{\log(m(p+1)) - 
     \log ((mp)^{p/p+1})}{\log m}  \\&=
  1 - \frac{p}{p+1} = \frac{1}{p+1}.
\end{align*}

Since trere exists a limit it follows that $ \overline{dim}_B (A)= \underline{dim}_B (A) = dim_B(A)  $.

Using the same reasoning we can calculate the graph dimension of set $ A $. What we need is to count $ g_{m}=\sum_{k=1}^m \min\{m,\#(K \cap [\frac{k-1}{m},\frac{k}{m}])\} $. The difference is that we now want the box for which $ f'(n_0) < \frac{1}{m^2}  $. From that
box on we will have m or more boxes of our grid meeting with our function whereas before that we will have less than m.
\\
Then we will calculate $ n_0 $ plus $ m $ times the box that $ n_0 $ lies in. 
Similar with the above we will have:

\begin{align*}
  n_0 = \left [ \sqrt[p+1]{m^2p} \right ] \\
  a_{n_0} \text{ lies in box } [ma_{n_0}] = [m
    \frac{1}{(\sqrt[p+1]{m^2p})^p}]
\end{align*}

And since we need to count each box until $ [ma_{n_0}] ~ m $ times
we need to calculate:
\[
  \displaystyle\lim_{m \to \infty} \dfrac{\log \left( \left[ \sqrt[p+1]{m^2p}~
\right] 
  + m \cdot \left [m \cdot \dfrac{1}{\left[ \sqrt[p+1]{m^2p}~ \right]^p}
  \right] \right) }{\log m}
\]
for $ \frac{p}{p+1} \leq \frac{1}{2} \Rightarrow p \leq 1 $ we have that when 
$ m \to \infty \Rightarrow  m a_{n_0} \to a \geq 1 $. Using the inequality 
$ m a_{n_0} < [m a_{n_0}~] < 2 m a_{n_0} $ we have  

\begin{align*}
  \lim_{m \to \infty} \frac{\log \left(\sqrt[p+1]{m^2p} +  m \cdot m \cdot
    \frac{1}{(\sqrt[p+1]{m^2p})^p} \right) }{\log m} = \\
  \lim_{m \to \infty} \frac{\log \left((m^2p)^{\frac{1}{p+1}} +  m^2 \cdot
    \frac{1}{(m^2p)^{p/p+1}} \right) }{\log m} = \\
  \lim_{m \to \infty}\frac{\log \frac{m^2(p+1)}{(m^2p)^{p/p+1}}}{\log m} = 
    \lim_{m \to \infty}\frac{\log(m^2(p+1)) - 
     \log ((m^2p)^{p/p+1})}{\log m} = \\
  2(1 - \frac{p}{p+1}) = \frac{2}{p+1}
\end{align*}

For $ \frac{p}{p+1} > \frac{1}{2} \Rightarrow p > 1 $ we have that when 
$ m \to \infty \Rightarrow  m a_{n_0} \to 0 \Rightarrow [m a_{n_0}~]
= 1 $. This gives:

\begin{align*}
  \lim_{m \to \infty} \frac{\log \left(\sqrt[p+1]{m^2p} +  m \right) }{\log m} =
\\
  \lim_{m \to \infty} \frac{\log \left((m^2p)^{\frac{1}{p+1}} +  m \right)
}{\log
    m} = \\
  \lim_{m \to \infty}\frac{\log \left( m^{\frac{2}{p+1}}(p^{\frac{1}{p+1}} +
    m^{\frac{p-1}{p+1}}) \right)}{\log m} =  
     \frac{2}{p+1} + \frac{p-1}{p+1} = 1.
\end{align*}

\end{proof}

\end{corol}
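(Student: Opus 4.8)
The plan is to compute both dimensions by a direct count of the boxes of the $1/m$-grid that meet $A$, using the fact that the gaps $b_n=a_n-a_{n+1}$ of the sequence $a_n=n^{-p}$ form a decreasing sequence. This follows from the convexity of $f(x)=x^{-p}$ together with the mean value theorem, which gives $b_n=-f'(u_n)$ for some $u_n\in(n,n+1)$ and $|f'|$ decreasing. Fixing $\delta=1/m$, the monotonicity of the gaps produces a single threshold index $n_0=n_0(m)$, namely the largest $n$ with $b_n\ge 1/m$, which one locates by solving $|f'(x)|=1/m$, so $n_0\approx(mp)^{1/(p+1)}$. For $n\le n_0$ the points $a_n$ lie in pairwise distinct boxes, contributing about $n_0$ boxes; for $n>n_0$ consecutive points are at distance $<1/m$, so every grid box between $0$ and the box containing $a_{n_0}$ is hit, contributing $\lfloor m a_{n_0}\rfloor$ more boxes. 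Since $a_{n_0}\approx(mp)^{-p/(p+1)}$ both $n_0$ and $m a_{n_0}$ are of order $m^{1/(p+1)}$, hence $N_{1/m}(A)=n_0+\lfloor m a_{n_0}\rfloor$ is of order $m^{1/(p+1)}$ and $\log N_{1/m}(A)/\log m\to 1/(p+1)$. The limit actually exists, so $\dims{A}=\tfrac{1}{p+1}$.

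For the graph box dimension I would invoke Theorem \ref{theorem001}, which reduces the task to estimating $\limsup_m\log g_m/\log m$ with $g_m=\sum_{k=1}^m\min\{m,\#(A\cap[(k-1)/m,k/m])\}$. The new ingredient is a \emph{second} threshold: a box near a value $v$ contains at least $m$ points of $A$ exactly when the local gap there is at most $1/m^2$, so rerunning the previous computation with $1/m$ replaced by $1/m^2$ gives an index $n_0'\approx(m^2p)^{1/(p+1)}$ with $a_{n_0'}\approx(m^2p)^{-p/(p+1)}$. I would then split $g_m$ into the ``saturated'' boxes, those with at least $m$ points of $A$, each contributing exactly $m$, and the remaining nonempty boxes, each contributing its own point count and together contributing at most the number of $a_n$ lying above the saturation level. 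For $p\le 1$ the saturated boxes are the roughly $m a_{n_0'}\approx m^{(1-p)/(p+1)}$ boxes between $0$ and the box of $a_{n_0'}$, contributing a total of order $m\cdot m^{(1-p)/(p+1)}=m^{2/(p+1)}$, while the unsaturated part contributes at most about $n_0'\approx m^{2/(p+1)}$. Hence $g_m$ is of order $m^{2/(p+1)}$ and $\gdims{A}=\tfrac{2}{p+1}$.

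The step I expect to be delicate is the bookkeeping near $0$, where the gap asymptotics break down: the box $[0,1/m]$, and a bounded number of its neighbours, contains infinitely many $a_n$, so its contribution to $g_m$ is exactly $m$ rather than whatever the density estimate predicts. One must verify that when $p\le 1$ these finitely many anomalous boxes only perturb lower-order terms, while when $p>1$ the saturation count $m^{(1-p)/(p+1)}$ falls below $1$, the saturated contribution collapses to order $m$, the unsaturated contribution is of order $m^{1/p}=o(m)$, and therefore $g_m$ is of order $m$, giving $\gdims{A}=1$ in that range (so the uniform statement is $\gdims{A}=\max\{\tfrac{2}{p+1},1\}$). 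Carrying along the constant $p$ and the floor functions at both thresholds, and separating these two regimes cleanly, is essentially the only work; the rest is the same elementary calculus of $x\mapsto x^{-p}$ used for $\dims{A}$.
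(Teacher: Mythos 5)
Your proposal is correct and follows essentially the same route as the paper: the same gap-monotonicity threshold $n_0$ located by solving $|f'(x)|=1/m$ (resp.\ $1/m^2$) to count $N_{1/m}(A)\approx n_0+[ma_{n_0}]$ and $g_m\approx n_0'+m\,[ma_{n_0'}]$, with the same split into the regimes $p\le 1$ and $p>1$. Your explicit remark that for $p>1$ one gets $\gdims{A}=1$, so the clean statement is $\gdims{A}=\max\{\tfrac{2}{p+1},1\}$, agrees with the paper's own computation (its headline formula $\tfrac{2}{p+1}$ is only accurate for $p\le 1$).
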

\section{construction of sets and refinement of \eqref{1}}

In this section, we are going to refine \eqref{1}, in the sense of Corollary \ref{corol002}, that for every set $X\subset [0,1]$ we have $$max\{1,2\dims{X}\} \leq \gdims{X} \leq 1+\dims{X} .$$ Furthermore we are going to prove that the new inequality is sharp, by constructing a set with $ \dims{X} = a $ and $ \gdims{X} = b $ , for every choice of $ 0<a \le 1 $ and $ b $ such that $$ max\{1,2a\} \le b \le 1+a. $$  

\begin{theorem}
\label{theorem002} 
  If a set $X$ has $ \dim{X} = a $ then $ \gdims{X} \geq 2a $
\end{theorem}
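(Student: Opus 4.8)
I would prove this directly from the formula in Theorem~\ref{theorem001}, namely $\gdims{X}=\limsup_{m\to\infty}\frac{\log g_m(X)}{\log m}$ where $g_m(X)=\sum_{k=1}^m \min\{m,\#(X\cap[\frac{k-1}{m},\frac{k}{m}])\}$. So it suffices to show $\limsup_{m}\frac{\log g_m}{\log m}\ge 2a$ whenever $\dims{X}=a$. The heuristic is: if at scale $1/m$ the set meets $N_{1/m}(X)$ boxes, those points are within a subinterval of length $\le 1$, so at the \emph{finer} scale $1/m^2$ they can no longer all be squeezed into few boxes — roughly, $N_{1/m^2}(X)$ will be forced to be at least of order $N_{1/m}(X)$, and moreover the points contributing to $N_{1/m}(X)$ make $g_{m^2}$ large because each of the $N_{1/m}(X)$ intervals of length $1/m$ either contains $\ge m$ of the finer boxes' worth of points (contributing $m$ to the $g_{m^2}$ sum via the $\min$) or else contributes its full point-count.

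First I would fix a sequence $m_j\to\infty$ along which $\frac{\log N_{1/m_j}(X)}{\log m_j}\to a$ (using that the upper box dimension is realized along a subsequence, and that we may pass to $\delta_k=1/k$ by the remark in the excerpt). Set $n_j=m_j^2$ (or a comparable integer). The core estimate is a lower bound $g_{n_j}(X)\ge c\, m_j\cdot N_{1/m_j}(X)$ for some absolute constant $c>0$. To get this, partition $[0,1]$ into the $m_j$ coarse intervals $J_k=[\frac{k-1}{m_j},\frac{k}{m_j}]$; each $J_k$ is a union of exactly $m_j$ of the fine intervals $[\frac{i-1}{n_j},\frac{i}{n_j}]$. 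For each coarse interval $J_k$ that meets $X$, look at the fine intervals inside it that meet $X$: if there are at least $m_j$ of them, they contribute at least $m_j$ distinct nonzero terms (each $\ge 1$) to the $g_{n_j}$ sum; if there are fewer than $m_j$ of them, then $X\cap J_k$ is contained in $<m_j$ fine intervals, but $X\cap J_k$ still has at least one point, and here I would instead note that the relevant term structure still lets me harvest a contribution — more carefully, I should split the coarse intervals into two types and show that at least one type carries $\gtrsim m_j N_{1/m_j}(X)$ total, arguing by contradiction: if \emph{few} fine intervals meet $X$ inside \emph{many} coarse intervals, then $N_{1/n_j}(X)$ is small, which combined with the trivial relation between $N$ at scales $1/m_j$ and $1/n_j$ would contradict $\dims{X}=a$ — actually the cleanest route is simply: $g_{n_j}(X)\ge \sum_{k} \min\{m_j, \#(X\cap J_k)\}$ reindexed, but since $\min\{n_j,\cdot\}\ge\min\{m_j,\cdot\}$ termwise and the $n_j$-sum refines the $m_j$-sum, one gets $g_{n_j}(X)\ge g_{m_j}(X)$, and then I want $g_{m_j}(X)\ge$ something; this needs the multiplicative boost, so the genuine content is relating counting at scale $1/m^2$ to counting at scale $1/m$.

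The honest version of the key step: for each coarse box $J_k$ meeting $X$, the points of $X\cap J_k$ lie in an interval of length $1/m_j$, hence occupy at least $\lceil (\mathrm{diam}(X\cap J_k))\cdot n_j\rceil$ fine boxes — but diameter can be tiny, so this alone is not enough. Instead I would count globally: $N_{1/n_j}(X)\ge N_{1/m_j}(X)$ always (refinement), and I claim $g_{n_j}(X)\ge \min\{m_j,1\}\cdot(\text{number of fine boxes meeting }X$ that lie in a coarse box containing $\ge m_j$ fine-box-hits$)+\dots$; to make this rigorous I would invoke that $\dims{X}=a$ forces, for infinitely many $m$, that $N_{1/m^2}(X)\ge \tfrac12 N_{1/m}(X)^{?}$ — here is where I expect the real obstacle. \textbf{The main obstacle} is exactly controlling the geometry when $X$ clusters: a priori $X\cap J_k$ could be a single point for every $k$, giving $g_m = N_{1/m}(X)$ with no boost, which would fail $2a$ — so the proof \emph{must} use that such extreme clustering is incompatible with $\dims{X}=a>0$, because if every coarse box held only $O(1)$ points, then $\#X=O(m)$ for all $m$ forcing $X$ to be finite, contradiction; the quantitative form of "$X$ is large enough that many coarse boxes are densely filled" is the crux, and I would extract it by a pigeonhole/averaging argument on $\sum_k \#(X\cap J_k) = \#X$ balanced against the box-counting growth rate $a$, concluding $g_{m^2}(X)\gtrsim N_{1/m}(X)\cdot m \approx m^{1+a}=(m^2)^{(1+a)/2}\ge (m^2)^{a}$ for $a\le 1$, which on taking $\log/\log$ along the subsequence yields $\gdims{X}\ge 2a$. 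Finally I would remark this combines with $\gdims{X}\ge 1$ from \eqref{1} to give the stated refinement $\max\{1,2a\}\le\gdims{X}$.
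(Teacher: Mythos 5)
Your plan correctly starts from the formula of Theorem \ref{theorem001}, but the scaling direction you chose cannot produce the factor $2$, and your sketch stalls exactly where the real idea is needed. Two concrete problems. First, even if your key estimate $g_{m^2}(X)\gtrsim m\,N_{1/m}(X)\approx m^{1+a}$ were true, it only gives $\frac{\log g_{m^2}(X)}{\log (m^2)}\approx \frac{1+a}{2}$, which is at most $1$ for every $a\le 1$ and is $\ge 2a$ only when $a\le \frac13$; your final line ``$(m^2)^{(1+a)/2}\ge (m^2)^{a}$ \dots yields $\gdims{X}\ge 2a$'' in fact yields only $\gdims{X}\ge a$ (more precisely $\ge\frac{1+a}{2}$), so the conclusion does not follow even granting the estimate. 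Second, the estimate itself is false for clustered sets, and your attempted rescue (``if every coarse box held only $O(1)$ points then $X$ would be finite'') is not valid: having $\#(X\cap J_k)=O(1)$ uniformly does not force $X$ to be finite, and the sets that achieve exactly $\gdims{X}=2a$ (e.g.\ $\{n^{-p}\}$ in Corollary \ref{corol01}, or the constructions of Section 4) exhibit precisely this clustering. So the obstacle you flagged is not a technicality; it is fatal to the $m\mapsto m^2$ route.

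The paper's proof goes in the opposite direction: it compares $g$ at the \emph{coarser} scale $[\sqrt m]$ with $N$ at scale $m$. Choose one point of $X$ in each $\frac1m$-box meeting $X$, forming a set $P^m$ with $|P^m|=N_{1/m}(X)$. Because distinct points of $P^m$ lie in distinct $\frac1m$-boxes, any interval $I_k$ of length $\frac{1}{[\sqrt m]}$ contains at most $[\sqrt m]+2\le 2[\sqrt m]$ of them, so the truncation $\min\{[\sqrt m],\cdot\}$ in $g_{[\sqrt m]}$ costs at most a factor $2$ and one gets $g_{[\sqrt m]}(X)\ge\frac12\sum_{k}\#(P^m\cap I_k)=\frac12 N_{1/m}(X)\approx m^{a}=([\sqrt m])^{2a}$. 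Dividing by $\log[\sqrt m]\approx\frac12\log m$ is what doubles the exponent. To repair your write-up, replace the passage from $m$ to $m^2$ by the passage from $m$ to $[\sqrt m]$ together with this separation argument; no averaging or pigeonhole on $\#X$ is needed, and the bound $\gdims{X}\ge 1$ is then combined with this as you indicate.
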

\begin{proof}
  Since $ \overline{dim_B} (X) = a $, we have that
  \[ 
    \limsup_{m \to \infty} \frac{\log N_{1/m}(X)}{\log m} = a. 
  \]
  If for each $ m $ we consider the set $ P^{m} $ that contains exactly one element of $ X $ for
  each box that intersects with $ X $ when we divide $ [0,1] $ in $ m $ boxes then $ N_{1/m}(X) = | P^{m} | $.
  Since there are no two elements of $ P^{m} $ in the same box, if we divide $ [0,1] $ in $ [\sqrt{m}] $ 
  boxes (which define intervals $ I_{k} $ for $ k \in \{1,..., [\sqrt{m}] \} $) we see that in each one of these boxes we have at most 
  $ [\sqrt{m}~] + 2 $ elements of $ P^{m} $. If that is not true then 
  the width of that box would have to be strictly larger than $ [\sqrt{m}] \cdot \frac{1}{m} \geq \frac{1}{[\sqrt{m}~]} $,
  which is \atopo. This means that $ | P^{m} \cap I_{k} | \leq [\sqrt{m}] + 2
  \leq 2 [\sqrt{m}~] $ for all $ k $. Now for the graph dimension we have
\begin{align*}
  \gdims{X} = &\limsup_{m \to \infty} \frac{\log g_m(X)}{\log m} \geq  
      \limsup_{m \to \infty} \frac{\log g_{[\sqrt{m}]}(X) }{\log [\sqrt{m}]} \\
  \geq & \limsup_{m \to \infty} \frac{\log \frac{1}{2} \sum_{k=1}^{[\sqrt{m}]} min\{2[\sqrt{m} ], \#(X \cap I_{k})\}}{\frac{1}{2} \log m} \\
  \geq & \limsup_{m \to \infty} \frac{\log \frac{1}{2} \sum_{k=1}^{[\sqrt{m}]} min\{2[\sqrt{m} ], \#(P^{m} \cap I_{k})\}}{\frac{1}{2} \log m} \\
  \geq & \limsup_{m \to \infty} \frac{\log \frac{1}{2} \sum_{k=1}^{[\sqrt{m}]}  \#(P^{m} \cap I_{k})\}}{\frac{1}{2} \log m} \\
  \geq & \limsup_{m \to \infty} \frac{\log \frac{1}{2} | P^{m} |}{\frac{1}{2} \log m} =  2 \limsup_{m \to
      \infty} \frac{\log N_{1/m}(X)}{\log m} = 2a.
\end{align*}

\end{proof}
\begin{corol}\label{corol002}
If a set $X$ has $ \dim{X} = a $ then $$ \max\{1,2a\}\leq \gdims{X} \leq a+1 .$$
\end{corol}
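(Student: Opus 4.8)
The plan is to combine the two bounds that have essentially already been established. The upper bound $\gdims{X} \le a+1$ is precisely inequality \eqref{1} from \cite{initial}, reproduced in the introduction, so nothing new is needed there. For the lower bound, I would simply invoke Theorem \ref{theorem002}, which gives $\gdims{X} \ge 2a$, together with the elementary fact from \eqref{1} that $\gdims{X} \ge 1$ always holds. Taking the maximum of these two lower bounds yields $\gdims{X} \ge \max\{1, 2a\}$, and stitching this together with the upper bound completes the chain $\max\{1,2a\} \le \gdims{X} \le a+1$.

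Concretely, the proof is two sentences: "By \eqref{1} we have $\gdims{X} \ge 1$, and by Theorem \ref{theorem002} we have $\gdims{X} \ge 2a$, hence $\gdims{X} \ge \max\{1,2a\}$. The upper bound $\gdims{X} \le a+1$ is the right-hand side of \eqref{1}." One should double-check that Theorem \ref{theorem002}'s statement is literally $\gdims{X} \ge 2a$ under the hypothesis $\dims{X} = a$ — it is — so there is no hidden case analysis (e.g. one does not need $a > 0$, since when $a = 0$ the bound $2a = 0$ is vacuous and $\max\{1,2a\} = 1$ is still delivered by \eqref{1}).

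There is essentially no obstacle here: this corollary is a bookkeeping consequence of Theorem \ref{theorem002} and the previously cited inequality \eqref{1}. The only thing to be mildly careful about is notational consistency — the excerpt writes $\dim{X}$ in the hypothesis of Theorem \ref{theorem002} and Corollary \ref{corol002} but $\dims{X} = \overline{dim}_B(X)$ elsewhere; I would state the hypothesis uniformly as $\dims{X} = a$ to avoid ambiguity. No genuine mathematical work is required beyond assembling the pieces.

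\begin{proof}
By inequality \eqref{1} we have $\gdims{X} \geq 1$ for every $X \subset [0,1]$. By Theorem \ref{theorem002}, since $\dims{X} = a$, we also have $\gdims{X} \geq 2a$. Combining these, $\gdims{X} \geq \max\{1, 2a\}$. The upper bound $\gdims{X} \leq a+1$ is exactly the right-hand inequality of \eqref{1}. Hence $\max\{1, 2a\} \leq \gdims{X} \leq a+1$.
\end{proof}
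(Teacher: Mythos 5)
Your proof is correct and is exactly the paper's argument: the paper likewise dispatches this corollary as "a straightforward combination of Theorem \ref{theorem002} and \eqref{1}." Nothing is missing.
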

\begin{proof}
The proof is a straightforward combination of Theorem \ref{theorem002} and \eqref{1}.
\end{proof}

\section{ Construction of sets}
\begin{theorem}
Let $0 < a \leq 1$ and $b$ with $\max\{2a,1\}\leq b \leq a+1$, then it exists a compact set $X$ with $\dims{X}=a$ and $\gdims{X}=b.$
\end{theorem}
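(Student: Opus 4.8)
The plan is to dispose of the extreme and boundary cases first and then build the generic set by superposition. If $a=1$ then $b=2$ is the only admissible value and $X=[0,1]$ works by Corollary \ref{corol1}. If $b=\max\{2a,1\}$ one can take $X$ to be a tail of the sequence $\{n^{-p}\}$ together with $0$, where $p=\frac1a-1$; after discarding finitely many terms this lies in $[0,\frac25]$, and the argument proving Corollary \ref{corol01} gives $\dims{X}=\frac1{p+1}=a$ and $\gdims{X}=\max\{\frac2{p+1},1\}=\max\{2a,1\}=b$ (the case $p\le1$ yielding $\frac2{p+1}=2a$, the case $p>1$ yielding $1$). So from now on assume $0<a<1$ and $\max\{2a,1\}<b\le a+1$; note this forces $b>1$ and $b-1\le a$.

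For the generic case I would keep the ``box-dimension part'' and the ``graph-dimension part'' spatially apart, say $A\subset[0,\frac25]$ and $B\subset[\frac35,1]$, and set $X=A\cup B\cup\{1\}$. For $A$ I take the power sequence above (with $p=\frac1a-1$), which on its own keeps $N_{1/m}(A)$ of order $m^a$ for every $m$ and contributes only a term of order $m^{\max\{2a,1\}}\le m^{b}$ to $g_m$. The set $B$ is a disjoint union $\bigcup_{j\ge1}B_j$ of finite ``blocks'' accumulating at $1$, each $B_j$ engineered so that $g_{m_j}$ receives a contribution of order $m_j^{b}$ along a very rapidly increasing sequence $m_j\to\infty$, while $B_j$ never forces $g_m$ above $Cm^{b}$ or $N_{1/m}$ above $Cm^{a}$ at any scale. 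Concretely $B_j$ consists of $\lceil m_j^{\,b-1}\rceil$ clusters spaced $1/m_j$ apart---so $B_j$ occupies an interval of length of order $m_j^{\,b-2}\to0$---each cluster being $\lceil m_j\rceil$ points crammed into a window of width $\varepsilon_j:=m_j^{-10b/a}$; since $\sum_j m_j^{\,b-2}$ converges one can fit the $B_j$ disjointly into $[\frac35,1]$ accumulating at $1$.

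The verification uses Theorem \ref{theorem001} (so it suffices to understand $g_m$), the additivity of $N_{1/m}$ and of $g_m$ over pieces sitting in subintervals separated by more than $1/m$, and a scale-by-scale analysis of a single block $B_j$. When $1/m$ is coarser than the cluster spacing every cluster is capped at $m$ and $g_m(B_j)$ has order $m_j^{\,b-2}m^2$, which is at most $m^{b}$ for $m\le m_j$ with value $m_j^{b}$ at $m=m_j$; for $m_j\le m\le M_j:=m_j^{\,1+10b/a}$ the clusters are first separated and then internally resolved, and a direct (if tedious) computation using $\varepsilon_j\le m_j^{-1}$ gives $g_m(B_j)\le Cm^{b}$ and $N_{1/m}(B_j)\le Cm^{a}$ throughout that range; for $m\ge M_j$ the block is fully resolved into roughly $m_j^{b}$ singletons, and $m_j^{b}\le m^{a}$ because $m\ge M_j$. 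Taking $m_{j+1}=m_j^{K}$ for a constant $K=K(a,b)$ large enough ensures that at each scale at most one block is ``active'', that the already-resolved blocks contribute only $\sum_{i<j}m_i^{b}$, negligible next to $m_j^{\,b-1}$, and that the infinitely many tiny blocks near $1$ pile into $O(1)$ boxes contributing $O(m)\le O(m^{b})$. Summing the contributions of $A$ and of all $B_j$ then gives $N_{1/m}(X)$ of order $m^{a}$ for every $m$ and $g_m(X)\le Cm^{b}$ for every $m$ with $g_{m_j}(X)\ge c\,m_j^{b}$, whence $\dims{X}=a$ and $\gdims{X}=b$.

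The crux---and the step I expect to be most delicate---is the off-peak control of the blocks: a block meant to enlarge $g$ at scale $1/m_j$ must not enlarge $g$ at coarser scales (where the cap $m$ is relatively loose) nor enlarge $N$ at finer scales (where it splits into many points). Here one is guided by the elementary fact that the number of occupied boxes can at most double when the scale is halved, so a block carrying $m_j^{b}$ points cannot yield a $g$-value of $m_j^{b}$ before scale $1/m_j$ without breaching $g_m\le Cm^{b}$; solving for the extremal profile compatible with the two bounds $g_m\le Cm^{b}$ and $N_{1/m}\le Cm^{a}$ is exactly what pins down the cluster count $m_j^{\,b-1}$ and cluster size $m_j$ above, and it is also the reason the admissible range is precisely $[\max\{2a,1\},a+1]$. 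What remains after choosing these parameters is bookkeeping: tracking the constants through the several scale regimes and through the superposition of all the pieces.
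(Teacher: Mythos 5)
Your construction is sound and covers all cases, but it is organized differently from the paper's. The paper builds a single family: at stage $n$ it places $[x_n^a]$ clusters at the points $i/x_n$ (this alone forces $\dims{X}=a$), each cluster consisting of $[x_n^c]$ points at spacing $1/x_{n+2}$, with $x_n=2^{n^n}$ lacunary; the peak of $g_m$ then occurs at the intermediate scale $m=x_{n+1}^{(1+c)/2}$, where each $1/m$-box holds about $m$ points (the cap), giving $\gdims{X}=\max\{1,2\tfrac{a+c}{1+c}\}$, and $c$ is solved from $b=2\tfrac{a+c}{1+c}$. You instead decouple the two dimensions: the power sequence $A$ (analyzed in Corollary \ref{corol01}) pins $N_{1/m}$ at order $m^a$ at every scale, while the blocks $B_j$ — $\lceil m_j^{b-1}\rceil$ clusters at spacing $1/m_j$, each with $\lceil m_j\rceil$ points in a window far narrower than $1/m_j$ — spike $g_{m_j}$ to order $m_j^{b}$ along a lacunary sequence, with exponents read off directly ($b=(b-1)+1$, and $b-1\le a$, $2-b\ge 1-a$ are exactly what the off-peak bounds need). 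The underlying mechanism is the same (clusters of roughly cap-many points at the peak scale, rapidly separated scales so stages do not interact), but your superposition $A\cup B$ buys transparency of the exponents and lets you reuse Corollary \ref{corol01} for the endpoint $b=\max\{2a,1\}$, at the price of an extra component and a three-regime case analysis per block; the paper's one-family construction is more economical (the cluster positions themselves carry the box dimension) but hides $b$ behind the substitution $b=2\tfrac{a+c}{1+c}$. Your scale-by-scale estimates, though only sketched, do close: for $m\le m_j$ one has $g_m(B_j)\le m^2m_j^{b-2}+2m\le m^b+2m$ and $N_{1/m}(B_j)\le mm_j^{b-2}+2\le Cm^a$ (using $1-a\le 2-b$), for $m_j\le m\le \varepsilon_j^{-1}$ each cluster occupies at most two boxes so $N\le 2m_j^{b-1}\le 2m^a$ and $g\le m_j^{b}\le m^b$, and beyond resolution the $m_j^{b}$ points are dominated by $m^{a}$ since $m\ge m_j^{1+10b/a}$; the resolved and not-yet-active blocks contribute $O(m^{a/10})$ and $O(m\,m_{j+1}^{b-2}+1)$ respectively, both harmless once $m_{j+1}=m_j^K$ with $K>1+10b/a$. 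Two small points of hygiene: include $0$ (limit of $A$) as well as $1$ so that $X$ is compact, and note that the lower bound $g_{m_j}(B_j)\ge c\,m_j^{b}$ needs the observation that each cluster, lying in at most two $1/m_j$-boxes, puts at least $m_j/2$ points in one of them.
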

\begin{proof}
For $0 < a \leq 1$ and $b=a+1,$ any perfect set $X$ with $\dims{X}=a$ will do, due to Corollary \ref{corol1}.
We will do the construction only for $a>0$ and $b$ with $\max\{2a,1\}\leq b < a+1.$
Let $x_{n}=2^{n^n}$, $0\leq c< 1$ and $X_{n,i}=\left\{\frac{i}{x_{n}}-\frac{j}{x_{n+2}}, j=1,...,\left[x^{c}_{n}\right]\right\}$ where $i\in \left\{1,...,[x_{n}^a]=k_{n}\right\}.$ We also set $$X_{n}=\bigcup_{i=1}^{n=k_{n}}X_{n,i} \hspace{2pt}\text{ and }\hspace{2pt} X=\bigcup_{n=1}^{\infty}X_{n}\bigcup\{0\}.$$

For $x_{n},$ with $n$ sufficiently big we have:

\begin{flalign}\label{aba}
\begin{aligned}
&(a) \hspace{8pt} x^{a}_{n}\geq 1+x_{1}^{2}+...x_{n-1}^{2},\\
&(b)\hspace{8pt} \lim_{n\rightarrow \infty}\frac{\log{x_{n+1}}}{\log{x_{n}}}=\infty. 
\end{aligned}&&
\end{flalign}

Furthermore for $n$ sufficiently big  is easy to check the following properties:
\begin{flalign}\label{abab}\begin{split}
&(a)  \hspace{8pt}  \frac{i-1}{x_{n}}\leq \inf {X_{n,i}}\leq \sup{X_{n,i}}\leq \frac{i}{x_{n}},\\
&(b)  \hspace{8pt}  \text{diam}(X_{n,i}) \leq \frac{1}{x_{n+1}},\\
&(c)  \hspace{8pt}  \frac{x^{a}_{n}}{2 x_{n}}\leq \sup X_{n} \leq \frac{x^{a}_{n}}{x_{n}},\\
&(d) \hspace{8pt}  |X_{n}| \leq x^{2}_{n}.
\end{split}&&\end{flalign}\\
First we are going to calculate the upper box dimension of $X$ and in the sequel, its upper graph box dimension.

For $x_{n}\leq m \leq x_{n+1}$ we have
$$N_{\frac{1}{m}}(X)\leq 1+N_{\frac{1}{m}}(X_{{n+1}})+N_{\frac{1}{m}}(X_{{n}})+x_{n-1}^{2}+x_{n-2}^{2}+...+x_{1}^{2}.\\$$
Now since diam$(X_{n,i})<\frac{1}{x_{n+1}}\leq\frac{1}{m}$ we have that at most $2 k_{n} \leq 2 [{x_{n}^a}]<2x_{n}^a$ boxes intersecting $X_{n}$. Also by (\ref{abab},c) we have 
\begin{equation*}
\frac{m x^{a}_{n+1}}{2 x_{n+1}}\leq N_{\frac{1}{m}}(X_{n+1})\leq\frac{mx^{a}_{n+1}}{x_{n+1}}+1,\end{equation*}
therefore by using $(\ref{aba},a)$ we get
\begin{align*}
\frac{\log{N_{\frac{1}{m}}(X)}}{\log{m}}&<\frac{\log{\frac{mx^{a}_{n+1}}{x_{n+1}}+3x_{n}^{a}}}{\log{m}}\\
&<\max\left\{\frac{\log{2\frac{mx^{a}_{n+1}}{x_{n+1}}}}{\log{m}},\frac{\log{6 x_{n}^a}}{\log{m}}\right\}\\
&<\max\left\{\frac{\log{2}+\log{m}-\log{x^{1-a}_{n+1}}}{\log{m}},\frac{\log{6}+a\log x_{n}}{\log{m}}\right\}\\
&<\max\left\{1+ \frac{\log{2}-\log{x^{1-a}_{n+1}}}{\log{m}},\frac{\log{6}+a\log x_{n}}{\log{m}}\right\}\\
&<\max\left\{1+ \frac{\log{2}-\log{x^{1-a}_{n+1}}}{\log{x_{n+1}}},\frac{\log{6}+a\log x_{n}}{\log{x_{n}}}\right\}\\
&<\max\left\{a+\frac{\log{2}}{\log{x_{n+1}}}, a+\frac{\log{6}}{\log{x_{n}}}\right\}.
\end{align*}
Now by letting $m,x_n$ go to infinity, and by observing $(\ref{aba},b)$  we get
\begin{equation*}
\dims{X}\leq a.
\end{equation*}
To get the lower bound, we just look at scales $m=x_{n}$.

\begin{align*}
\dims{X}=&\limsup_{m\rightarrow\infty}\frac{\log N_{\frac{1}{m}}(X)}{\log{m}}\geq\limsup_{n\rightarrow\infty}\frac{\log N_{\frac{1}{x_{n}}}(X_{n})}{\log{x_{n}}}\\
\geq &\limsup_{n\rightarrow\infty}\frac{\log (\frac{x_{n}^a}{2x_{n-1}})}{\log{x_{n}}}\stackrel{(\ref{aba},b)}{=} a.
\end{align*}

Now for $g_{m}(X)$ we have
\begin{align*}
g_{m}(X)&\leq g_{m}\left(\bigcup_{i=n+2}^{\infty}X_{i}\right)+ g_{m}\left(\bigcup_{i=1}^{n+1}X_{i}\right)\leq m +\sum_{i=1}^{m+1}g_{m}(X_{i})\\
&\leq m+g_{m}(X_{n+1})+{x_{n}^{a+c}}+\sum_{i=1}^{n-1}x_{i}^{2}\stackrel{(\ref{aba},a)}{\leq} 2m+g_{m}(X_{n+1})+x_{n}^{a+c},
\end{align*}
where for $g_{m}(X_{n+1})$ we have
\begin{equation}\label{bbb}
g_{m}(X_{n+1})\leq \left\{\begin{aligned}&\frac{m^{2} x_{n+1}^{a}}{x_{n+1}}+m \hspace{8pt} &m\leq x^{\frac{1+c}{2}}_{n+1} \\ &{x_{n+1}^{a+c}} \hspace{8pt} &m\geq x^{\frac{1+c}{2}}_{n+1} \end{aligned}\right. .
\end{equation}
The first estimate comes from the fact that we have at most $N_{m}(X_{n+1})=\frac{mx_{n+1}^{a}}{x_{n+1}}+1$ boxes occupied by points of $X_{n+1}$, and we can utilize at most $m$ points in every one of these boxes, while the second comes from the fact that we have at most $k_{n+1}[x_{n+1}^{c}]<{x_{n+1}^{a+c}}$ points in $X_{n+1}$ in total.

Thus we get
\begin{align*}
&\frac{\log{g_{m}(X)}}{\log{m}}<\frac{\log\left({3m+(g_{m}(X_{n+1})-m)+x_{n}^{a+c}}\right)}{\log{m}}\\&<\max\left\{\frac{\log{6m}}{\log{m}},\frac{\log{3(g_{m}(X_{n+1})-m)}}{\log{m}},\frac{\log {3x_{n}^{a+c}}}{\log{m}}\right\}\\
&<\max\left\{1+\frac{\log{6}}{\log{m}},\frac{\log{3(g_{m}(X_{n+1})-m)}}{\log{m}},\frac{\log {3}+(a+c)\log{x_{n}}}{\log{x_{n}}}\right\}.
\end{align*}
It is easy to see from \eqref{bbb} that 
\begin{equation*}
\frac{\log{(g_{m}(X_{n+1})-m)}}{\log{m}}\leq \frac{\log{x_{n+1}^{a+c}}}{\log{x^{\frac{1+c}{2}}_{n+1}}}\leq 2\frac{a+c}{1+c}.
\end{equation*}
Therefore we have \begin{align*} \frac{\log{g_{m}(X)}}{\log{m}}\leq \max\left\{1+\frac{\log{6}}{\log{m}}, 2\frac{a+c}{1+c}, a+c + \frac{\log {3}}{\log{x_{n}}}\right\},
\end{align*}
and by letting $m,x_{n}$ go to infinity, and by observing that $2\frac{a+c}{1+c}>a+c$ and recalling $(\ref{aba},b)$ we have

\begin{equation*}
\gdims{X}<\max\{1,2\frac{a+c}{1+c}\}.
\end{equation*}

To get the lower bound, we just look at scales $m=\left[x^{\frac{1+c}{2}}_{n+1}\right]$. First we need to observe that since $\frac{{i}}{x_{n+1}}\in X_{n+1,i}$ we have that for every $0\leq j \leq [\frac{x^{\frac{c+2a-1}{2}}_{n+1}}{2}],$ is true that $\left[\frac{j}{\left[x^{\frac{1+c}{2}}_{n+1}\right]},\frac{j+1}{\left[x^{\frac{1+c}{2}}_{n+1}\right]}\right]$ intersects at least $\left[{x_{n+1}^{\frac{1-c}{2}}}\right]-2\geq \left[\frac{x_{n+1}^{\frac{1-c}{2}}}{2}\right]$ of the sets $X_{n+1,i}$, thus containing at least $\left[\frac{x_{n+1}^{\frac{1-c}{2}}}{2}\right]-2\geq \left[\frac{x_{n+1}^{\frac{1-c}{2}}}{4}\right]$ of them, and therefore containing at least $\left[\frac{x_{n+1}^{\frac{1-c}{2}}}{4}\right][x_{n+1}^{c}]\geq\left[\frac{x_{n+1}^{\frac{1+c}{2}}}{8}\right]$ points.  Therefore $g_{\left[x^{\frac{1+c}{2}}_{n+1}\right]}(X_{n+1})> \left[\frac{x_{n+1}^{\frac{1+c}{2}}}{8}\right]([\frac{x^{\frac{c+2a-1}{2}}_{n+1}}{2x_{n}}]+1)>\frac{x_{n+1}^{a+c}}{16x_{n}}.$

Now we have

\begin{align*}
\gdims{X}=&\limsup_{m\rightarrow\infty}\frac{\log g_{m}(X)}{\log{m}}\geq\limsup_{n\rightarrow\infty}\frac{\log g_{\left[x^{\frac{1+c}{2}}_{n+1}\right]}(X_{n+1})}{\log\left({\left[x^{\frac{1+c}{2}}_{n+1}\right]}\right)}\\
\geq &\limsup_{n\rightarrow\infty}\frac{\log\left(\frac{x_{n+1}^{a+c}}{16}\right)}{\log{\left( x^{\frac{1+c}{2}}_{n+1}\right)}}\geq 2\frac{a+c}{1+c}.
\end{align*}
Also $\gdims{X}\geq 1$ trivially. Therefore \begin{equation*}
\gdims{X}=\max\{1,2\frac{a+c}{1+c}\}.
\end{equation*}
Now by choosing $c$ such that $b=2\frac{a+c}{1+c}$ we get our result.
\end{proof}

\appendix
\section{}
 Here are some general results regarding functions in $ \IR $ and box counting that we use for the proof of Theorem \ref{theorem001}. We will consider the $ \delta $-meshes as the union $ \cup_{i,j \in \IN} {B_i^j} $ with $ B_i^j = [i\delta,(i+1)\delta) \times [j\delta,(j+1)\delta) $. That means that $ \mathbf{B}_i = \cup_{j \in \IN} B_i^j $ is the $ i+1 $ column of the mesh.

\begin{lemma}
\label{lemma001}
  
  Let $\delta>0$ and $f,g \in C_{u}(X)$ with  $g>0$ and  $||g||_{\infty} \leq \delta $. We have $N_\delta (f+g) \ge \frac{1}{2} N_\delta (f).$  
\end{lemma}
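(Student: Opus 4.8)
The plan is to exploit the special structure of the perturbation: $g$ changes only the second coordinate of each point of the graph, and by at most one mesh unit. So I would work column by column in the $\delta$-mesh $\bigcup_{i,j}B_i^j$ described in the appendix. Since the columns $\mathbf{B}_i$ partition the plane, for any $h$ defined on $X$ we have $N_\delta(h)=\sum_i |S_i(h)|$, where $S_i(h)=\{\,j\in\IN : \mathrm{graph}(h)\cap B_i^j\neq\emptyset\,\}$ records the rows occupied inside the $i$-th column. Because $X\subset[0,1]$ and $f,g$ are bounded, each $S_i(h)$ is finite, so these sums make sense. The key observation to set up is that $S_i(f)$ and $S_i(f+g)$ concern exactly the same set of abscissae, namely $X\cap[i\delta,(i+1)\delta)$, since adding $g$ moves no point horizontally.

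Next I would prove the per-column estimate $|S_i(f)|\le 2\,|S_i(f+g)|$. Fix a column $i$ and a row $j\in S_i(f)$, and pick $x\in X\cap[i\delta,(i+1)\delta)$ with $j\delta\le f(x)<(j+1)\delta$. Since $0<g(x)\le\delta$, one gets $j\delta<f(x)+g(x)<(j+2)\delta$, so the point $(x,f(x)+g(x))$ lies in $B_i^j$ or in $B_i^{j+1}$; hence $\{j,j+1\}\cap S_i(f+g)\neq\emptyset$. This lets me define a map $\phi:S_i(f)\to S_i(f+g)$ by choosing, for each $j$, some element of $\{j,j+1\}\cap S_i(f+g)$. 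Since $\phi(j)=t$ forces $j\in\{t-1,t\}$, every $t$ has at most two preimages, so $|S_i(f)|=\sum_{t\in S_i(f+g)}|\phi^{-1}(t)|\le 2\,|S_i(f+g)|$.

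Summing this over all columns yields $N_\delta(f)=\sum_i|S_i(f)|\le 2\sum_i|S_i(f+g)|=2N_\delta(f+g)$, which is the assertion. I do not expect a genuine obstacle here; the only point demanding care is the bookkeeping with the half-open boxes $[j\delta,(j+1)\delta)$, so that the boundary cases $f(x)=j\delta$ and $g(x)=\delta$ are handled correctly when one claims $f(x)+g(x)$ lands in one of two consecutive boxes — this is exactly where the hypotheses $g>0$ and $\|g\|_\infty\le\delta$ are used. Note that uniform continuity of $f$ and $g$ plays no role beyond their being bounded functions.
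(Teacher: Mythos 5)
Your proof is correct and follows essentially the same route as the paper: decompose the count by columns of the $\delta$-mesh, observe that since $0<g\le\delta$ each occupied box $B_i^j$ of $f$ forces $f+g$ to occupy $B_i^j$ or $B_i^{j+1}$, and conclude a per-column factor of $2$ before summing. Your formalization via the at-most-two-to-one map $\phi$ is in fact a cleaner bookkeeping device than the paper's "take every second point" argument, but the underlying idea is identical.
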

\begin{proof}
For every $x\in X$ and $B_{i}^{j}$ we have that if $ (x,f(x)) \in B_i^j $ then $(x,(f+g)(x)) \in B_i^j \cup B_{i}^{j+1}$. Now let  $\mathbf{B}_i = \cup_{j \in \IN} B_i^j $ be an arbitrary column, and $B_i^{j_{1}}, ... B_i ^{j_{i}}$ be the boxes in that column intersected from the graph of  $f(x).$ Finally let $(x_{j_1},f(x_{j_1})),...,(x_{j_i},f(x_{j_i})),$ be the points in the corresponding boxes. Wlog we can assume that $i$ is even number. Then $(x_{j_2},(f+g)(x_{j_2})),(x_{j_4},(f+g)(x_{j_4})),...,(x_{j_i},(f+g)(x_{j_i}),$ belong to different boxes. So $f+g$ intersects with at least $\frac{j_i}{2}$ boxes of the $\mathbf{B}_i$ column. Now by summing over all columns, we get what we want.
\end{proof} 
\begin{lemma}
\label{boxes_of_f+g}
If given a $ \delta $-grid and two functions f, g $ (\mathbb R \rightarrow \mathbb R^+) $ such that g intersects with $ N_\delta(g) $ boxes of the grid and f intersects with at most $ n_f $ boxes at each column of the grid then their sum intersects with at least $ \frac{N_\delta(g)}{2n_f} $ boxes of the grid.
\end{lemma}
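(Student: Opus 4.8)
The plan is to run the column-by-column argument from the proof of Lemma~\ref{lemma001}, but to insert a pigeonhole step that absorbs the fact that $f$ is no longer assumed small. Write the $\delta$-mesh as $\bigcup_{i,j}B_i^j$, and for a fixed column $\mathbf{B}_i$ let $s_i$ be the number of boxes of that column met by graph$(g)$. Since distinct columns contain disjoint boxes, $\sum_i s_i=N_\delta(g)$, so it suffices to show that graph$(f+g)$ meets at least $s_i/(2n_f)$ boxes of $\mathbf{B}_i$ for each $i$, and then sum over $i$.

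First I would fix such a column and, for each of its $s_i$ boxes $B_i^{j}$ that meets graph$(g)$, pick a witness $y\in[i\delta,(i+1)\delta)$ with $g(y)\in[j\delta,(j+1)\delta)$; call this $j$ the $g$-level of $y$, and note that the $s_i$ chosen witnesses have pairwise distinct $g$-levels. Each point $(y,f(y))$ lies on graph$(f)$ within column $\mathbf{B}_i$, hence inside one of the at most $n_f$ boxes of that column that graph$(f)$ meets; so by pigeonhole some single level $p$ satisfies $f(y)\in[p\delta,(p+1)\delta)$ for a sub-collection $S$ of the witnesses with $|S|\ge s_i/n_f$.

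Now for every $y\in S$ with $g$-level $j$ we have $(f+g)(y)=f(y)+g(y)\in[(p+j)\delta,(p+j+2)\delta)$, so $(y,(f+g)(y))$ lies in $B_i^{p+j}$ or in $B_i^{p+j+1}$ --- the same one-box margin used in Lemma~\ref{lemma001}. Hence the level of $(y,(f+g)(y))$ determines $j$ up to at most two possibilities, i.e.\ the map sending $y\in S$ to the box of $\mathbf{B}_i$ it occupies under $f+g$ is at most two-to-one; therefore graph$(f+g)$ meets at least $\tfrac12|S|\ge s_i/(2n_f)$ distinct boxes of $\mathbf{B}_i$. Summing over all columns gives $N_\delta(f+g)\ge\sum_i s_i/(2n_f)=N_\delta(g)/(2n_f)$, as required.

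The argument is routine; the only point needing care is the final combinatorial step --- that passing from a witness's $g$-level $j$ to the level of $(y,(f+g)(y))$ is at most two-to-one. This uses that $g(y)$ is confined to the single box-height window of its level $j$ and, on $S$, that $f(y)$ is confined to the single window of level $p$, so the sum spans only two box-heights; positivity of $f$ and $g$ is needed merely so that the graphs sit in the quadrant indexed by the $B_i^j$. One should also record the small bookkeeping fact that the pigeonhole level $p$ is automatically one of the $\le n_f$ levels occupied by graph$(f)$ in the column, precisely because every witness lies in that column.
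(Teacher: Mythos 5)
Your argument is correct and is essentially the paper's own proof run forwards instead of by contradiction: both work column by column, both rest on the fact that a point with $f$-level $p$ and $g$-level $j$ has sum-level $p+j$ or $p+j+1$, and both pigeonhole over the at most $n_f$ boxes of the column occupied by graph$(f)$. Your direct "two-to-one" formulation is a slightly cleaner packaging of the same counting, and it quietly absorbs the small-$n_{i,g}$ case the paper has to treat separately, but it is not a different route.
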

\begin{proof}
We will first prove the result for a single column of boxes.

Let $ a_m=(x_m, g(x_m)) $ be $ n_{i,g} $ distinct points in which $ g $ intersects with the elements of the column $ \mathbf{B}_{i} $. Let $ n_{i,f} , n_{i,f+g} $ be the number of boxes of column $ i $ that intersect with $ f , f+g $ respectively.

Every $ a_m $ lies in a unique $ B_i^j $ since we are using disjoint boxes.

Since $ \forall x \in [i\delta,(i+1)\delta),$ we have that $f(x) \in B_i^j $ for some $ j \in \mathbb{N} $ and since $ f $ intersects with only $ n_{i,f} $ elements of the column it follows that there is a subset $ G $ of $ \mathbb{N} $ with exactly $ n_{j,f} $ elements such that $ \forall x \in [j\delta,(j+1)\delta)~~~f(x) \in B_i^j $ and $ i \in G $.

Now we will show that if $ (a,b) \in B_i^j $ and $ (a,c) \in B_i^l $ then $ (a,b+c) $ lies in either $ B_{i}^{j+l} $ or in $ B_{i}^{j+l+1} $.

\begin{align*}
& a \in [i\delta,(i+1)\delta] ~and~ (a,b) \in B_i^j \Leftrightarrow b \in [j,j+1) \\
& so \\
& j \leq b < j+1 \\
& l \leq c < l+1  \Rightarrow \\
& j+l \leq b+c < j+l+2 \Rightarrow \\
& b+c \in [j+l,j+l+1) ~\text{ or }~ b+c \in [j+l+1,j+l+2) \Rightarrow \\
& (a,b+c) \in B^{j+l}_i \text{ or } (a,b+c) \in B^{j+l+1}_i
\end{align*}

Now we can show that $ f+g $ intersects with at least $ \frac{n_{i,g}}{2n_f} $
elements of the column. 
If $ n_{i,f+g} < \frac{n_{i,g}}{2n_{i,f}} $ then the $ n_{i,g} $ points $ (x_m, (f+g)(x_m)) $ (where $ x_m $ are the first coordinates of the points $ a_m $) lie in less than $ \frac{n_{i,g}}{2n_{i,f}} $ elements of the column. So there must be a set of at least $ 2n_{i,f} + 1 $ of the $ x_{m} $ (we will call them $ x_{m_l} $) for which all points $
(x_{m_l}, (f+g)(x_{m_l})) $ lie in $ B_i^k $, for some $ k \in \mathbb{N} $.  

The above is assuming that $ n_{i,g} \ge 2n_{i,f} + 1 $, if this is not true we have the trivial case where $ n_{i,f+g} \ge 1 $ which is true.

Since all $ (x_{m_l},f(x_{m_l})) $ lie in at most $ n_{i,f} $ elements of $ \mathbf{B}^i $ then in the $ 2n_{i,f} + 1 $ of them there are at least 3 points $ (x_{m_l},f(x_{m_l})) $
that lie in $ B_i^n $, for some $ n \in \mathbb{N} $.

If we combine this with the above and the fact that the corresponding points $
(x_{m_l},(f+g)(x_{m_l})) $ are all in $ B_i^k $ we have 2 distinct points $
a_i=(x_i,g(x_i)) $ in the same box (either $ B_i^{k-n} $ or $ B_i^{k-n-1} $). 
Given the selection of $ \{ a_m \} $ \atopo.

Since $ n_{i,f} \le n_f $ we have $ n_{i,f+g} \ge \frac{n_{i,g}}{2n_f} $. By summing over all j (the sum is finite) we obtain $ N_\delta(f+g) \ge \frac{N_\delta(g)}{2n_f} $.

\end{proof}

\begin{lemma}
\label{polyf}
Let $ B_i^j $ a box covering of $ [0,1] \times \IR$. 
Let also $ f $ a piecewise smooth function in $ [0,1] $. Let also assume that the derivative, where it is defined, is bounded by some constant  $ k $. 
Then $ f $ meets with each column of boxes $ B_i^j $ in at most $ k+1 $ boxes.
\end{lemma}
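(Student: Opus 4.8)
The plan is to reduce the statement to a one-dimensional oscillation estimate for $f$ on the base of a single column, followed by an elementary count of how many height-$\delta$ cells an interval of bounded length can meet.

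First I would fix a column $\mathbf{B}_i = \cup_{j\in\IN} B_i^j$, whose base is the interval $J = [i\delta,(i+1)\delta)$. By the definition of the mesh, the box $B_i^j$ is met by the graph of $f$ exactly when there is some $x\in J$ with $f(x)\in[j\delta,(j+1)\delta)$; hence the number of boxes of the column met by the graph equals the number of distinct indices of the form $j=\lfloor f(x)/\delta\rfloor$ with $x\in J$. So it suffices to bound the number of height-$\delta$ cells that the image $f(J)$ meets.

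The key step is the oscillation bound $\sup_{x,y\in J}|f(x)-f(y)|\le k\delta$. Since $f$ is piecewise smooth on $[0,1]$, it is continuous and there are only finitely many points at which $f'$ may fail to exist; on each maximal subinterval where $f$ is smooth the mean value theorem together with $|f'|\le k$ gives $|f(x)-f(y)|\le k|x-y|$. Chaining these estimates across the finitely many break points lying inside $J$ and invoking continuity at those points, I obtain $|f(x)-f(y)|\le k|x-y|\le k\delta$ for all $x,y\in J$. In other words $f$ is $k$-Lipschitz, so its image on $J$ is contained in an interval of length at most $k\delta$. Writing $m=\inf_{J}f$ and $M=\sup_{J}f$, the cells $[j\delta,(j+1)\delta)$ that meet $[m,M]$ are exactly those with $\lfloor m/\delta\rfloor\le j\le\lfloor M/\delta\rfloor$, so their number is
\[
\lfloor M/\delta\rfloor-\lfloor m/\delta\rfloor+1 < \frac{M-m}{\delta}+2 \le k+2,
\]
which, being an integer strictly below $k+2$, is at most $k+1$. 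This bounds the number of boxes met in an arbitrary column and completes the proof.

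The main obstacle is nothing deep, but rather the careful handling of two points: justifying the Lipschitz estimate across the finitely many non-smooth points, where one must use continuity of $f$ (and not merely the pointwise derivative bound) in order to chain the mean value estimates into a global modulus of continuity; and pinning down the off-by-one in the final cell count, so that the integer count collapses from the naive $k+2$ down to the asserted $k+1$.
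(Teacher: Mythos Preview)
The paper states Lemma~\ref{polyf} without proof, so there is no argument to compare against; your approach via the Lipschitz oscillation bound and the cell count is exactly the natural one and is correct.

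One small point to tighten: your final step ``an integer strictly below $k+2$ is at most $k+1$'' tacitly treats $k$ as an integer. For non-integer $k$ the best the inequality yields is $\lceil k\rceil+1$, and indeed an interval of length $k\delta$ can meet $\lceil k\rceil+1$ cells (e.g.\ $k=1.5$, $m=0.5\delta$, $M=2\delta$ meets three cells). This is harmless for the paper's purposes, since Lemma~\ref{polyf} is only used to feed a column-wise bound $n_f$ into Lemma~\ref{boxes_of_f+g}, and any constant depending only on the Lipschitz constant of $f$ suffices there; but if you want the statement exactly as written you should either assume $k\in\IN$ or replace $k+1$ by $\lceil k\rceil+1$.
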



\begin{thebibliography}{99}
\bibitem{initial} Hyde, J., Laschos, V., Olsen, L., Petrykiewicz, I. and Shaw, X., {\em On the box dimensions of graphs of typical continuous functions.} Journal of Mathematical Analysis and Applications, 391 (2), pp. 567-581.
\bibitem{Falconer}  Falconer, Kenneth . {\em Fractal geometry
Mathematical foundations and applications.}
Second edition, John Wiley and Sons, Inc., Hoboken, NJ,  2003. 
\bibitem{Petruska} P. Humke, G. Petruska, {\em The packing dimension of a typical continuous function is 2.} Real Anal. Exchange 14 (1988/1989) 345–358
\bibitem{Williams} R.D. Mauldin, S.C. Williams, {\em On the Hausdorff dimension of some graphs.} Trans. Amer. Math. Soc. 298 (1986) 793–803.
\end{thebibliography}
\end{document}